\def\crr{\cr\noalign{\vskip2mm}}
\def\dref#1{(\ref{#1})}
\def\disp{\displaystyle}
\newtheorem{remark}{Remark}[section]
\newtheorem{theorem}{Theorem}[section]                   
\newtheorem{lemma}{Lemma}[section]
\begin{document}

\title{On Convergence of Tracking Differentiator with  Multiple Stochastic Disturbances}

\author{Ze-Hao Wu,  Hua-Cheng Zhou, Bao-Zhu Guo, and Feiqi Deng
\thanks{Ze-Hao Wu is with School of Mathematics and Big Data, Foshan University, Foshan 528000, China.
        {\tt\small  Email: zehaowu@amss.ac.cn} }
 \thanks{Hua-Cheng Zhou  is with School of Mathematics and Statistics, Central South University, Changsha 410075, China.
 {\tt\small  Email: hczhou@amss.ac.cn} }
 \thanks{Bao-Zhu Guo  is with Academy of Mathematics and Systems Science, Chinese Academy of Sciences, Beijing 100190, China.
 {\tt\small  Email: bzguo@iss.ac.cn} }
 \thanks{Feiqi Deng  is with Systems Engineering Institute, South China University of Technology, Guangzhou 510640, China.
 {\tt\small  Email: aufqdeng@scut.edu.cn}
 }}%

%
%

\markboth{}%
{Shell \MakeLowercase{\textit{et al.}}: Bare Demo of IEEEtran.cls
for Journals}
%



\maketitle

\begin{abstract}
In this paper, the  convergence and noise-tolerant performance of a  tracking differentiator
in the presence of multiple stochastic disturbances are  investigated for the first time.
We consider a quite  general case where  the input signal is  corrupted  by  additive colored noise,
and the tracking differentiator itself  is disturbed by additive colored noise and white noise.
It is shown that the tracking differentiator tracks the input signal and its generalized derivatives in mean square
and even in almost sure sense when the stochastic noise affecting the input signal is vanishing. 
Some numerical simulations are performed to validate the theoretical results.
\end{abstract}

\begin{IEEEkeywords}
Tracking differentiator, convergence, noise-tolerant performance, multiple stochastic disturbances.
\end{IEEEkeywords}

%
\IEEEpeerreviewmaketitle

\section{Introduction}
%
%
%
%

\IEEEPARstart{I}t is generally known that the powerful yet primitive proportional-integral-derivative
(PID) control law developed during the period of the 1920s-1940s has been dominating  control engineering for one century.
However,  the derivative control may be not practically feasible because the classical differentiation is  sensitive to
and may amplify the noise. A  noise-tolerant tracking differentiator (TD)
which is also the first part of the powerful active disturbance rejection control (ADRC) technology \cite{Han2009},
was first proposed by Han in \cite{han1994}. A detailed  comparison with other  differentiators aforementioned
  was made in  \cite{xue2010}. The effectiveness of TD has been validated by  numerous  numerical experiments and 
engineering applications, see, for instance \cite{su2005,tian2013,shen2017,zhang2018}.
The  convergence  of a simple linear TD was first presented in \cite{Guo2002}  with application for  online estimation 
of the frequency of  sinusoidal signals.   Some convergence analyses of the nonlinear TD for both two-dimensional and
   high-dimensional cases under some weak assumptions were given in  \cite{Guo2011}. The  weak convergence of a nonlinear
TD based on finite-time stable system was presented in \cite{Guo2012}.   The more comprehensive introduction
  including the  convergence analysis  of linear, nonlinear  and finite-time stable TD can be found in  Chapter 2
  of the monograph  \cite{Guo2016} without considering input noises. However, in practical implementations, 
  stochastic disturbances are inevitable and the stochastic systems are modelled in many situations, see, 
  for instance  \cite{krtics,Pan2,bask3,ZhaoDeng2022tac}. Motivated from this consideration, 
  in this paper,  we investigate for the first time, the convergence and noise-tolerant performance of   
 TD when the input signal is corrupted  by  additive colored noise, and the TD itself is disturbed by  additive 
 colored  and  white noises.

The main contributions and novelty of this paper are twofold. Firstly, from a theoretical perspective, the
 convergence and noise-tolerant performance of TD are firstly  analyzed  rigorously in the presence of 
 multiple stochastic disturbances which include  both additive colored noise and  white noise.
 Secondly, the theoretical results reveal that the states of TD track both the input signal and its generalized derivatives in
mean square and even in almost sure sense in the case that the stochastic noise corrupting the input signal is vanishing.

We proceed as follows. In the next section, section \ref{Se2}, the problem is  formulated  and
some preliminaries are presented. In Section \ref{Se3}, the  main results are presented with proofs in Appendices.
 Some numerical simulations are presented in section \ref{Se4}, followed up  concluding remarks
 in section \ref{Se5}.

\section{Problem formulation and preliminaries}\label{Se2}
The following notations are used throughout the paper.
The $\mathbb{R}^{n}$ denotes the $n$-dimensional Euclidean space;
$\mathbb{E}X$ or $\mathbb{E}(X)$ denotes the mathematical expectation of a random
variable $X$; For a vector or matrix $X$, $X^{\top}$ represents its transpose;
$|X|$ represents the absolute value of a scalar $X$, and
$\|X\|$ represents the Euclidean norm of a vector $X$;
$a\wedge b$ denotes the minimum of reals $a$ and $b$.

Let $(\Omega,\mathcal{F},\mathbb{F}, P)$ be a complete filtered probability space with a filtration
$\mathbb{F}=\{\mathcal{F}_{t}\}_{t\geq 0}$  on which three  mutually independent one-dimensional 
standard Brownian motions $B_{i}(t)\;(i=1,2,3)$ are defined. In many cases,  the stochastic 
disturbances are modeled by white noise which is a stationary stochastic process that has zero mean and constant
spectral density and  is the generalized derivative of the Brownian motion (see, e.g., \cite[p.51, Theorem 3.14]{duan2015}).
Nevertheless, the white noise does not always  well describe the  stochastic disturbances occurring in
 nature because its $\delta$-function correlation is an idealization of the correlations of real processes 
 which often have finite, or even long, correlation time \cite{colorednoise}.
A more realistic description could be given by an exponentially correlated process,
which is known as colored noise or Ornstein-Uhlenbeck process \cite{colorednoise,colourednoise2}.
Let $w_{i}(t)\;(i=1,2)$ denote the colored noise.
They are the solutions of  the It\^{o}-type stochastic differential equations
(see, e.g., \cite[p.426]{colorednoise}, \cite[p.101]{mao}):
\begin{equation}\label{21equ}
dw_{i}(t)=-\alpha_{i}w_{i}(t)dt+ \alpha_{i}\sqrt{2\beta_{i}}dB_{i}(t),
\end{equation}
where $\alpha_{i}>0$ and $\beta_{i}$ are given constants describing
the correlation time and the noise intensity, respectively,
 and the initial values $w_{i}(0)\in L^{2}(\Omega; \mathbb{R})$ are independent of $B_{i}(t)$.
 In other words, the parameters $\alpha_{i}$ describe the bandwidth of the noise, while $\beta_{i}$ denote
its spectral height, and the correlation functions of the processes $w_{i}(t)$ are
 more realistic exponential functions yet not the  $\delta$-ones (see, e.g., \cite{colorednoise}).
In what follows,  $\alpha_{i}$ and $\beta_{i}$ can be  unknown constants.

Let $v(t)$ be a time-varying input signal which is  supposed to be contaminated by 
 additive colored noise. Therefore, the input signal is actually
\begin{equation}\label{vdefiniton}
v^{*}(t):= v(t)+\sigma_{1}w_{1}(t),
\end{equation}
where $\sigma_{1}$ is a  constant that could be unknown and represents the intensity of 
the colored noise. In addition, we consider a general case where  the system constructing TD is disturbed 
by  both additive colored noise and  white noise as follows:
\begin{equation}\label{TD}
 \left\{\begin{array}{l}
dx_{1}(t)=x_{2}(t)dt, \cr
dx_{2}(t)=x_{3}(t)dt, \cr
\hspace{1.3cm} \vdots \cr
dx_{n-1}(t)=x_{n}(t)dt,\cr
dx_{n}(t)=r^{n}f(x_{1}(t)-v^{*}(t),\frac{x_{2}(t)}{r},\cdots,\frac{x_{n}(t)}{r^{n-1}})dt\cr\hspace{1.1cm}
+\sigma_{2}w_{2}(t)dt+\sigma_{3}dB_{3}(t),
\end{array}\right.
\end{equation}
where $r>0$ is a tuning parameter, $f:\mathbb{R}^{n}\rightarrow \mathbb{R}$  is an appropriate known 
function chosen to satisfy the following Assumption (A1), and ``$\sigma_{2}w_{2}(t)+\sigma_{3}\dot{B}_{3}(t)$"  
represents the multiple stochastic disturbances  with  $\sigma_{i}\;(i=2,3)$ being constants that could  be unknown and
$\dot{B}_{3}(t)$ being the white noise which is the  formal  derivative of the  Browian motion. {  Han's TD  in \cite{han1994}
is a special case of \dref{TD} with $\sigma_i=0, i=1,2,3.$}
The consideration of such a TD  is based on three aspects: First, such a  TD itself in noisy environment is  quite general
whereas the  TD without any noise corruption is just a special case of  $\sigma_{2}=\sigma_{3}=0$.
Second, the  quantization errors caused by the digital implementation of TD always exist
 and can be regarded as a kind of process noise. Finally, TD is the first part
 of the powerful ADRC which has been hardwired into the general purpose control chips made by
industry giants such as Texas Instruments \cite{Ti}, where the hardware might work
in noisy environment.

In addition, it should be noticed that the solution of (\ref{TD}) depends  on the
tuning parameter $r$. Hereafter, we always drop $r$ from  solutions by abuse of notation without confusion.

The following Assumption (A1) is  a prior assumption about the known function
$f(\cdot)$ chosen in   constructing TD \dref{TD}.

{\bf Assumption (A1).}  The $f:\mathbb{R}^{n}\rightarrow \mathbb{R}$ is a locally Lipschitz 
continuous function with respect to its arguments, $f(0,\cdots,0)=0$, and there exist known 
constants $\lambda_{i}>0\;(i=1,2,3,4)$ and  a twice continuously differentiable function
$V: \mathbb{R}^{n}\to [0,\infty)$ which is positive definite
and radially unbounded such that
\begin{eqnarray}
\disp
&&\lambda_{1}\|z\|^{2}\leq V(z)\leq \lambda_{2}\|z\|^{2},\; \lambda_{3}\|z\|^{2}\leq W(z)\leq \lambda_{4}\|z\|^{2},
\cr && \sum^{n-1}_{i=1}\frac{\partial V(z)}{\partial
z_{i}}z_{i+1}+\frac{\partial V(z)}{\partial
z_{n}}f(z)\leq -W(z), \cr&&
\left|\frac{\partial V(z)}{\partial
z_{j}}\right|\leq c_{1}\|z\|,\; \left|\frac{\partial^{2} V(z)}{\partial
z^{2}_{j}}\right|\leq c_{2},\;\; \cr &&
\forall\;z=(z_{1},z_{2},\cdots,z_{n})\in
\mathbb{R}^{n},\; j=1,n,
\end{eqnarray}
 for some nonnegative continuous function $W:\mathbb{R}^{n}\rightarrow [0,\infty)$  and some constants $c_{i}>0\;(i=1,2)$.

\begin{remark}
Generally speaking, the Assumption (A1) guarantees that the function $f:\mathbb{R}^{n}\to  \mathbb{R}$ is chosen so that
the zero equilibrium state of the following system
\begin{equation}\label{2900}
\dot{z}(t)=\left(z_{2}(t),z_{3}(t),\cdots, f(z(t)\right))
\end{equation}
is globally exponentially stable with $z=(z_1,z_2,\cdots,z_{n})$.
It is easy to verify that the simplest example to satisfy Assumption (A1) is the linear function
\begin{equation}\label{linearfunction}
f(z)=a_{1}z_{1}+\cdots+a_{n}z_{n},
\end{equation}
where the parameters $a_{i}\;(i=1,2,\cdots,n)$  are chosen such that the matrix
\begin{equation}\label{matric2}\disp
A=
\begin{pmatrix}
0     & 1      &0      &  \cdots   &  0             \cr 0     & 0 &
1     &  \cdots   &  0             \cr \cdots &\cdots & \cdots &
\cdots& \cdots  \cr 0     & 0     & 0     &  \ddots   &  1 \cr a_1
& a_2 &\cdots & a_{n-1} & a_n
\end{pmatrix}_{n\times n}
\end{equation}
is Hurwitz.  The TD with linear function $f(\cdot)$ given by (\ref{linearfunction}) is referred as linear TD in what follows.
\end{remark}

 The solution of (\ref{21equ}) can be explicitly expressed as
\begin{eqnarray}
 w_{i}(t)=e^{-\alpha_{i}t}w_{i}(0)+\int^{t}_{0}e^{-\alpha_{i}(t-s)}\alpha_{i}\sqrt{2\beta_{i}}dB_{i}(s).
\end{eqnarray}
Define
\begin{equation}
\gamma_{i}=\mathbb{E}|w_{i}(0)|^{2}+\alpha_{i}\beta_{i},\; i=1,2.
\end{equation}
By the It\^{o} isometric formula, it is easy to verify  that
the  second moments of $w_{i}(t)\; (i=1,2)$ are bounded:
\begin{eqnarray}\label{11boundedness}
\disp &&\mathbb{E}|w_{i}(t)|^{2} \cr&&
= e^{-2\alpha_{i}t}\mathbb{E}|w_{i}(0)|^{2}
+\mathbb{E}|\int^{t}_{0}e^{-\alpha_{i}(t-s)}\alpha_{i}\sqrt{2\beta_{i}}dB_{i}(s)|^{2} \cr && \leq
\mathbb{E}|w_{i}(0)|^{2}+2\alpha^{2}_{i}\beta_{i}\int^{t}_{0}e^{-2\alpha_{i}(t-s)}ds \cr &&
\leq \gamma_{i}, \; \forall t\geq 0.
\end{eqnarray}
This  is the reason behind that  the TD may be feasible when the input signal is disturbed by
additive colored noise.

\section{Main results}\label{Se3}

Set $\hat{B}_{1}(t)=\sqrt{r}B_{1}\left(\frac{t}{r}\right),\; \hat{B}_{3}(t)=\sqrt{r}B_{3}\left(\frac{t}{r}\right).$
Notice that for any $r>0$, $\hat{B}_{1}(t)$ and $\hat{B}_{3}(t)$ are still
mutually independent one-dimensional standard Brownian motions.
 By  definition of $v^{*}(t)$ in (\ref{vdefiniton}), it follows  that
\begin{equation}
dv^{*}(t)=\dot{v}(t)dt-\sigma_{1}\alpha_{1}w_{1}(t)dt+\sigma_{1}\alpha_{1}\sqrt{2\beta_{1}}dB_{1}(t),
\end{equation}
and then
\begin{equation}
dv^{*}(\frac{t}{r})=\dot{v}(\frac{t}{r})dt-
\frac{\sigma_{1}\alpha_{1}}{r}w_{1}(\frac{t}{r})dt+\frac{\sigma_{1}\alpha_{1}\sqrt{2\beta_{1}}}{\sqrt{r}}d\hat{B}_{1}(t),
\end{equation}
where $\dot{v}(\frac{t}{r})$
denotes, in what follows,  the derivative of $v(\frac{t}{r})$ with respect to the time  $t$.
For $i=2,\cdots,n$, set
\begin{eqnarray}\label{variabled}
 y_{1}(t)=x_{1}(\frac{t}{r})-v^{*}(\frac{t}{r}), \;
 y_{i}(t)=\frac{1}{r^{i-1}}x_{i}(\frac{t}{r}).
\end{eqnarray}
A direct computation shows that $y(t)=(y_{1}(t),\cdots,y_{n}(t))$ satisfies the following
It\^{o}-type stochastic differential equation:
\begin{equation}\label{kerequ}
\left\{\begin{array}{l}\disp
dy_{1}(t)=y_{2}(t)dt-\dot{v}(\frac{t}{r})dt+\frac{\sigma_{1}\alpha_{1}}{r}w_{1}(\frac{t}{r})dt
\cr-\frac{\sigma_{1}\alpha_{1}\sqrt{2\beta_{1}}}{\sqrt{r}}d\hat{B}_{1}(t), \cr
dy_{2}(t)=y_{3}(t)dt, \cr \hspace{1.2cm} \vdots \cr
dy_{n-1}(t)=y_{n}(t)dt, \cr
dy_{n}(t)=f(y(t))dt+\frac{\sigma_{2}}{r^{n}}w_{2}(\frac{t}{r})dt+\frac{\sigma_{3}}{r^{n-\frac{1}{2}}}d\hat{B}_{3}(t).
\end{array}\right.
\end{equation}

We first introduce  Lemma \ref{lemmasef} below to present the existence and 
uniqueness of the global solution to system  (\ref{kerequ}) and
 give an estimate of  the second moment of the global solution.
\begin{lemma}\label{lemmasef}
Suppose that $v:[0,\infty)\rightarrow \mathbb{R}$ is a continuously differentiable function 
satisfying $\sup_{t\geq 0}(|v(t)|+|\dot{v}(t)|)\leq M$ for some
constant $M>0$  and Assumption (A1) holds, and
the tuning parameter $r$ is chosen so that  $r\geq 1$. Then, for any initial value $x(0)\in \mathbb{R}^{n}$,
system (\ref{kerequ}) admits a unique global solution which
satisfies
\begin{eqnarray}\label{fde4}
 \mathbb{E}\left(\sup_{0\leq s\leq t}\|y(s)\|^{2}\right)\leq
 \frac{1}{\lambda_{1}}\left(N_{1}+\frac{N_{2}}{N_{3}}\right)e^{N_{3}t}, \;\forall t\geq 0,
\end{eqnarray}
where the constants $N_{i}(i=1,2,3)$
are specified in (\ref{constantsds}).
\end{lemma}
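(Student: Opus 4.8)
The plan is to follow the classical localization-plus-Lyapunov scheme (in the spirit of the non-explosion criterion in \cite{mao}). Since $f$ is locally Lipschitz and the remaining drift coefficients are affine in the bounded or square-integrable processes $\dot v(\cdot/r)$, $w_1(\cdot/r)$, $w_2(\cdot/r)$, while the two diffusion coefficients are constant, the coefficients of \dref{kerequ} are locally Lipschitz on each bounded time interval. Hence for every initial value there is a unique maximal local solution defined up to an explosion time $\tau_{\infty}=\lim_{m\to\infty}\tau_{m}$, where $\tau_{m}=\inf\{t\ge 0:\|y(t)\|\ge m\}$. The task is then to prove $\tau_{\infty}=\infty$ almost surely and, simultaneously, to extract the second-moment bound \dref{fde4}; the Lyapunov function $V$ supplied by Assumption (A1) is the natural instrument.

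First I would apply It\^{o}'s formula to $V(y(t))$. Because $\hat B_{1}$ and $\hat B_{3}$ enter only the first and last components and are independent, the diffusion matrix makes only the $(1,1)$ and $(n,n)$ entries of $\Sigma\Sigma^{\top}$ nonzero, so the It\^{o} correction reduces to $\tfrac12\left(\tfrac{2\sigma_1^2\alpha_1^2\beta_1}{r}\tfrac{\partial^2 V}{\partial y_1^2}+\tfrac{\sigma_3^2}{r^{2n-1}}\tfrac{\partial^2 V}{\partial y_n^2}\right)$ -- which is precisely why Assumption (A1) only constrains $\partial V/\partial z_j$ and $\partial^2 V/\partial z_j^2$ for $j=1,n$. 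Collecting terms and invoking the dissipation inequality and the Hessian bounds of (A1),
\[
\mathcal{L}V \le -W(y) + \frac{\partial V}{\partial y_1}\left(-\dot v(\tfrac{t}{r})+\tfrac{\sigma_1\alpha_1}{r}w_1(\tfrac{t}{r})\right) + \frac{\partial V}{\partial y_n}\tfrac{\sigma_2}{r^n}w_2(\tfrac{t}{r}) + \tfrac{c_2}{2}\left(\tfrac{2\sigma_1^2\alpha_1^2\beta_1}{r}+\tfrac{\sigma_3^2}{r^{2n-1}}\right).
\]
The three cross terms each have the form $\tfrac{\partial V}{\partial y_j}\cdot(\text{noise})$ with $|\partial V/\partial y_j|\le c_1\|y\|$, so Young's inequality converts them into a multiple of $\|y\|^2$ plus terms in $|\dot v|^2,|w_1|^2,|w_2|^2$; using $r\ge 1$ and $\|y\|^2\le V(y)/\lambda_1$ this yields a bound $\mathcal{L}V\le N_3 V(y)+g(t)$, where $g(t)$ gathers the constant It\^{o}-correction and the squared-noise contributions.

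The key observation is that $\mathbb{E}g(t)$ is bounded uniformly in $t$: the deterministic pieces are constants (using $r\ge 1$ and $|\dot v|\le M$), and $\mathbb{E}|w_i(\cdot/r)|^2\le\gamma_i$ by \dref{11boundedness}. I would then run the standard stopped argument: writing It\^{o}'s formula up to $t\wedge\tau_m$ and taking expectations eliminates the martingale part, giving $\mathbb{E}V(y(t\wedge\tau_m))\le V(y(0))+N_3\int_0^t\mathbb{E}V(y(s\wedge\tau_m))\,ds+N_2 t$, and Gronwall's inequality produces $\mathbb{E}V(y(t\wedge\tau_m))\le (N_1+N_2/N_3)e^{N_3 t}$ uniformly in $m$. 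This uniform bound forces $P(\tau_m\le t)\to 0$, hence $\tau_{\infty}=\infty$ almost surely (non-explosion, so the global solution exists), and by Fatou's lemma the pointwise estimate $\mathbb{E}V(y(t))\le (N_1+N_2/N_3)e^{N_3 t}$.

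The remaining step, moving the supremum inside the expectation, is the one I expect to be the main obstacle. Returning to $V(y(s))=V(y(0))+\int_0^s\mathcal{L}V\,du+M(s)$ with $M$ the It\^{o} martingale, I would bound $\sup_{s\le t}\int_0^s\mathcal{L}V\,du\le\int_0^t(N_3V(y(u))+g(u))\,du$ and control $\mathbb{E}\sup_{s\le t}|M(s)|$ by the Burkholder--Davis--Gundy (or Doob) inequality: its quadratic variation is $\int_0^t\left[(\partial V/\partial y_1)^2\tfrac{2\sigma_1^2\alpha_1^2\beta_1}{r}+(\partial V/\partial y_n)^2\tfrac{\sigma_3^2}{r^{2n-1}}\right]du\le C\int_0^t\|y\|^2\,du\le (C/\lambda_1)\int_0^t V(y(u))\,du$, so that $\mathbb{E}\sup_{s\le t}|M(s)|\le\big(C'\int_0^t\mathbb{E}V(y(u))\,du\big)^{1/2}$. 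Feeding in the pointwise bound just established, every right-hand term is dominated by a multiple of $e^{N_3 t}$, and dividing by $\lambda_1\le V(y)/\|y\|^2$ gives \dref{fde4}. The delicate part is the bookkeeping: organizing the Young, BDG, and Gronwall constants so they collapse into exactly the $N_i$ of \dref{constantsds}, and checking that the BDG contribution (which carries $e^{N_3 t/2}$) is genuinely absorbed into the stated $e^{N_3 t}$ rather than forcing a self-referential absorption of $\mathbb{E}\sup_{s\le t}V(y(s))$ on the right-hand side.
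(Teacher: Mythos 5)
Your proposal is correct and follows the same skeleton as the paper's proof: localization via the stopping times $\tau_m$, It\^{o}'s formula applied to $V(y(t))$, the dissipation and Hessian bounds of Assumption (A1) together with $\mathbb{E}|w_i(\cdot)|^2\le\gamma_i$ and Young's inequality, then Burkholder--Davis--Gundy and Gronwall, with Fatou's lemma to remove the stopping time at the end. The one place where you organize things differently is the step you yourself flag as delicate. You propose a two-pass argument: first take expectations of the stopped It\^{o} identity (killing the martingale), run Gronwall to get the pointwise bound $\mathbb{E}V(y(t\wedge\tau_m))\le (N_1+N_2/N_3)e^{N_3t}$ and hence non-explosion, and only then control $\mathbb{E}\sup_{s\le t}|M(s)|$ by BDG and Jensen, feeding the already-established pointwise bound into $\mathbb{E}\langle M\rangle_t\le C\int_0^t\mathbb{E}V\,du$. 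The paper instead does everything in a single pass on $\mathbb{E}\bigl(\sup_{0\le u\le t}V(y(u\wedge\tau_m))\bigr)$ and uses exactly the ``self-referential absorption'' you were hoping to avoid: the BDG bound for the $\hat B_3$-integral is split as $\tfrac12\mathbb{E}\bigl(\sup_{s\le t}V(y(s\wedge\tau_m))\bigr)+\int_0^t\frac{16c_1^2\sigma_3^2}{r^{2n-1}\lambda_1}\,ds$ and the half-supremum is moved to the left-hand side (this is why $N_1=2\mathbb{E}V(y(0))+\cdots$ carries a factor $2$ and $N_3$ contains $\frac{2}{\lambda_1}$), while the $\hat B_1$-integral is split into a constant plus $\frac{1}{\lambda_1}\int_0^t\mathbb{E}V(y(s\wedge\tau_m))\,ds$, which feeds into Gronwall. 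Both routes are sound; the paper's one-pass version yields the specific constants $N_i$ of (\ref{constantsds}) directly, whereas your two-pass version avoids the absorption at the cost of a BDG term of order $e^{N_3t/2}$ that must then be dominated by $e^{N_3t}$, producing constants of the same form but not literally those in (\ref{constantsds}). Since the lemma pins down the $N_i$, if you want to reproduce the statement verbatim you would need to redo the bookkeeping the paper's way; as a proof of existence, uniqueness, and an exponential-in-$t$ bound on $\mathbb{E}\sup_{s\le t}\|y(s)\|^2$, your argument is complete.
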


\begin{proof}
See  ``Proof of Lemma \ref{lemmasef}'' in  Appendix A.
\end{proof}

\hspace{0.5cm}

In what follows, a value range of the tuning parameter
to guarantee the convergence of TD (\ref{TD}) can be specified as
\begin{equation}\label{37df}
R_{0}:=\left\{r\geq 1:\frac{1}{r}+\frac{1}{2r^{2n-1}}\leq\frac{\theta\lambda_{3}}{\lambda_{2}}\right\},
\end{equation}
where $\theta\in(0,1)$ is any chosen parameter. Note that when  $\theta\in(0,1)$ is increasing, the range $R_{0}$ will increase
as well  but
the exponential decay rate $\frac{(1-\theta)\lambda_{3}}{\lambda_{2}}$ { associated with the tracking error}
would be reduced.

The convergence result of TD (\ref{TD}) in the presence of multiple
stochastic disturbances is summarized as the following Theorem \ref{theorem}.

\begin{theorem}\label{theorem}
Suppose that $v:[0,\infty)\rightarrow \mathbb{R}$ is a continuously differentiable function 
satisfying $\sup_{t\geq 0}(|v(t)|+|\dot{v}(t)|)\leq M$ for some
constant $M>0$  and Assumption (A1) holds. Then, for any tuning parameter $r\in R_{0}$,  
initial value $x(0)\in \mathbb{R}^{n}$ and  $T>0$, the TD (\ref{TD}) admits a unique global solution satisfying
\begin{eqnarray}\label{errord}
\hspace{-0.4cm}\mbox{(i)}\;\;\;\;\mathbb{E}|x_{1}(t)-v(t)|^{2}\leq \frac{(1+\frac{1}{\mu})\Gamma}{r}+(1+\mu)\sigma^{2}_{1}\gamma_{1}
\end{eqnarray}
uniformly in $t\in [T,\infty)$, where $\mu$ is any positive constant and $\Gamma$ 
specified in (\ref{734fd}) is a positive constant   independent of $r$;
\begin{eqnarray}\label{error2}
\hspace{-2.2cm}\mbox{(ii)}\;\;\;\limsup_{r \to \infty} \mathbb{E}|x_{1}(t)-v(t)|^{2}\leq \sigma^{2}_{1}\gamma_{1}
\end{eqnarray}
uniformly in $t\in [T,\infty)$;
\begin{eqnarray}\label{319fd}
\hspace{-1.5cm}\mbox{(iii)}\;\lim_{r \to \infty}|x_{1}(t)-v(t)|=0 \;\; \mbox{almost surely}
\end{eqnarray}
uniformly in $t\in [T,\infty)$ when   $\sigma_{1}=0$.
\end{theorem}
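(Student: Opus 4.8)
The plan is to reduce all three claims to a single estimate on the rescaled internal error process $y(\tau)$ governed by \dref{kerequ}. The starting point is the exact algebraic identity supplied by the change of variables \dref{variabled}: evaluating $y_1$ at time $rt$ gives $y_1(rt)=x_1(t)-v^{*}(t)$, so by \dref{vdefiniton}
\begin{equation*}
x_1(t)-v(t)=y_1(rt)+\sigma_1 w_1(t).
\end{equation*}
Thus the tracking error splits into the rescaled internal error $y_1(rt)$, which I will show is of order $r^{-1}$ in mean square on $[T,\infty)$, and the irreducible input-noise term $\sigma_1 w_1(t)$, whose second moment is controlled by \dref{11boundedness}. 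The engine for the first piece is a Lyapunov estimate on $\mathbb{E}V(y(\tau))$.

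First I would establish the decay of $\mathbb{E}V(y(\tau))$. Applying It\^{o}'s formula to $V$ along \dref{kerequ} and using the dissipation inequality together with the gradient/Hessian bounds of Assumption (A1), the generator obeys $\mathcal{L}V\le-\lambda_3\|y\|^2+(\text{cross terms})+(\text{It\^{o} corrections})$. Each cross term has the form $c_1\|y\|\cdot(\text{bounded forcing})$ arising from $\dot v(\tfrac{\cdot}{r})$, $w_1(\tfrac{\cdot}{r})$ and $w_2(\tfrac{\cdot}{r})$, and is split by Young's inequality into a small multiple of $\|y\|^2$, absorbed into $-\lambda_3\|y\|^2$, plus constants of order $r^{-1}$ or smaller; the It\^{o} corrections contribute the constants $\propto r^{-1}$ and $\propto r^{-(2n-1)}$ attached to the two diffusion coefficients in \dref{kerequ}. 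After writing $-\lambda_3\|y\|^2\le-\tfrac{\lambda_3}{\lambda_2}V$ via $V\le\lambda_2\|y\|^2$, the constraint \dref{37df} defining $R_0$ is precisely what makes this absorption possible while retaining the decay rate $\tfrac{(1-\theta)\lambda_3}{\lambda_2}$, leaving the differential inequality $\tfrac{d}{d\tau}\mathbb{E}V(y(\tau))\le-a\,\mathbb{E}V(y(\tau))+\tfrac{b}{r}$ with $a=\tfrac{(1-\theta)\lambda_3}{\lambda_2}$ and $b$ independent of $r$. A Gronwall/comparison step then gives $\mathbb{E}V(y(\tau))\le\mathbb{E}V(y(0))e^{-a\tau}+\tfrac{b}{ar}$, where $\mathbb{E}V(y(0))$ is bounded uniformly for $r\ge1$. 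Evaluating at $\tau=rt$ with $t\ge T$, using $r\ge1$ to control the transient through $\sup_{r\ge1}r\,e^{-arT}<\infty$, and $\mathbb{E}\|y\|^2\le\lambda_1^{-1}\mathbb{E}V$, yields $\mathbb{E}|y_1(rt)|^2\le\Gamma/r$ uniformly on $[T,\infty)$. Claim (i) then follows by inserting this and $\mathbb{E}|w_1(t)|^2\le\gamma_1$ into $\mathbb{E}|p+q|^2\le(1+\tfrac1\mu)\mathbb{E}|p|^2+(1+\mu)\mathbb{E}|q|^2$, and claim (ii) follows by letting $r\to\infty$ in (i) and then $\mu\to0^{+}$.

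For (iii) I set $\sigma_1=0$, so that $x_1(t)-v(t)=y_1(rt)$ and the diffusion driven by $\hat{B}_1$ disappears from \dref{kerequ}. The leading $O(r^{-1})$ forcing, which was generated by the input-noise diffusion $\propto\sigma_1^2$, is thereby removed, so the surviving forcing decays strictly faster than $r^{-1}$ (of order $r^{-2}$ when $n\ge2$), and the argument above sharpens to $\mathbb{E}\|y(rt)\|^2\le Cr^{-\kappa}$ with $\kappa>1$. To reach the almost-sure uniform statement I would upgrade this to the strong bound $\mathbb{E}\big[\sup_{t\ge T}|y_1(rt)|^2\big]\le Cr^{-\kappa}$: apply It\^{o} to $V$ on $[rT,\infty)$, control the single remaining ($\hat{B}_3$) stochastic integral by the Burkholder--Davis--Gundy and Doob maximal inequalities, and decompose the infinite horizon into successive time windows so that the exponential Lyapunov decay renders the windowed maxima summable. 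A Chebyshev estimate $P(\sup_{t\ge T}|y_1(rt)|>\epsilon)\le C/(r^{\kappa}\epsilon^2)$ along a sequence $r_k\to\infty$ with $\sum_k r_k^{-\kappa}<\infty$, followed by Borel--Cantelli, then gives $\sup_{t\ge T}|x_1(t)-v(t)|\to0$ almost surely.

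I expect the genuine difficulty to lie entirely in (iii), for two reasons. First, converting the pointwise-in-time decay of $\mathbb{E}V$ into a supremum-in-time bound over the \emph{infinite} horizon $[T,\infty)$ is delicate: a crude BDG estimate of the martingale part over $[rT,\tau]$ grows with $\tau$ and must be tamed window-by-window using the exponential decay. Second, promoting convergence from a summable subsequence $r_k$ to the almost-sure statement requires care, since the whole law of $y$ (not merely the time window) changes with $r$, so there is no monotonicity in $r$ to exploit and the passage rests on the improved rate $\kappa>1$ together with Borel--Cantelli. By contrast, claims (i) and (ii) are essentially immediate once the Lyapunov decay estimate and the splitting identity are in hand.
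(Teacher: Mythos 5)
Your handling of parts (i) and (ii) follows essentially the same route as the paper: the rescaled error system \dref{kerequ}, the Lyapunov differential inequality $\frac{d}{dt}\mathbb{E}V(y(t))\le-\frac{(1-\theta)\lambda_{3}}{\lambda_{2}}\mathbb{E}V(y(t))+\frac{\Gamma_{1}}{r}$ made possible by the constraint \dref{37df}, a comparison/Gronwall step, evaluation at $\tau=rt$ with the transient tamed by $\sup_{r}\,r e^{-\frac{(1-\theta)\lambda_{3}}{\lambda_{2}}rT}<\infty$, and the $(1+\frac{1}{\mu})/(1+\mu)$ splitting against $\mathbb{E}|w_{1}(t)|^{2}\le\gamma_{1}$. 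These two parts are fine.

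Part (iii) is where you depart from the paper, and where your plan has a genuine gap. The paper needs neither an improved decay rate nor a supremum-in-time moment bound: from $\lim_{r\to\infty}\mathbb{E}|x_{1}(t)-v(t)|^{2}=0$ uniformly on $[T,\infty)$ it takes $\varepsilon=m^{-4}$, finds thresholds $r^{*}(m)$, applies Chebyshev to get $P\{|x_{1}(t)-v(t)|>1/m\}\le m^{-2}$ for all $r\ge r^{*}(m)$, and runs Borel--Cantelli over the integer index $m$, so the $O(r^{-1})$ rate already suffices. Your plan instead hinges on two ingredients that are not secured. First, the claim that setting $\sigma_{1}=0$ leaves only forcing of order $r^{-\kappa}$ with $\kappa>1$ is not automatic: the cross term generated by $\dot v(\cdot/r)$ survives $\sigma_{1}=0$ and, under the Young split used in the rest of your argument (absorbing $\frac{\lambda_{1}}{2r}\|y\|^{2}$), contributes a constant of order $r^{-1}$; to push it to $r^{-2}$ you must redo the split so that a fixed fraction of $-\lambda_{3}\|y\|^{2}$ absorbs it, and even then the It\^{o} correction from $\sigma_{3}\hat{B}_{3}$ is exactly of order $r^{-(2n-1)}$, which beats $r^{-1}$ only for $n\ge2$. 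Second, and more seriously, Borel--Cantelli along a summable sequence $r_{k}$ delivers almost-sure convergence only along $\{r_{k}\}$; you yourself note there is no monotonicity or continuity in $r$ to exploit, yet you then assert that the improved rate ``together with Borel--Cantelli'' yields the continuum limit $r\to\infty$ --- it does not, and no interpolation mechanism between consecutive $r_{k}$ is supplied. To your credit, your insistence on a bound for $\mathbb{E}\sup_{t\ge T}|y_{1}(rt)|^{2}$ addresses a real subtlety (one null set valid for all $t\in[T,\infty)$) that the paper's pointwise-in-$t$ Chebyshev argument passes over silently; but the windowed BDG/Doob construction that would deliver it over an infinite horizon is only sketched, so as it stands part (iii) is not proved.
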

\begin{proof}
See  ``Proof of Theorem \ref{theorem}" in Appendix B.
\end{proof}

\hspace{0.5cm}

\begin{remark}
Note that the tracking error system (\ref{kerequ}) is an It\^{o}-type stochastic system. Thus, the convergence in mean square
sense is natural   because the  It\^{o} integral terms are
zero as   martingales after taking mathematical expectation. In addition, the mean square sense denotes
the convergence of an average level of the tracking error, which could be  in line with
engineering applications since the deviation of every sample path of the tracking error from the average level is
often small. Finally,  we can see from  (\ref{errord}) that the upper bound of the 
tracking error in mean square can approach $\sigma^{2}_{1}\gamma_{1}$
 arbitrarily and quickly  by tuning the parameter $r$ to be sufficiently large since the convergence 
 time $T$ is any positive constant. {  This is  what we mean by  ``TD
is not sensitive to  input noise''.  It seems impossible to make the
tracking error as small as possible when $\sigma_1\neq 0$ in \dref{vdefiniton}.}
\end{remark}

\begin{remark}
It is noteworthy that the selection of the function $f(\cdot)$  guarantees that  
the ``nominal part" of the tracking error system (\ref{kerequ}) defined
in (\ref{2900}) is exponentially stable with the decay rate   $\frac{\lambda_{3}}{\lambda_{2}}$ 
which depends on $f(\cdot)$. By
 definition of $\Gamma$ { from (\ref{734fd}) in Appendix B,}
the constant $\Gamma$ which is a part of the tracking error (depending on $f(\cdot)$) becomes smaller
if  the decay rate $\frac{\lambda_{3}}{\lambda_{2}}$ becomes larger.
 In addition, another advantage could be mentioned is that  when  the decay rate $\frac{\lambda_{3}}{\lambda_{2}}$ becomes larger,
  the value range of the tuning parameter $r$  defined in (\ref{37df}) will  increase.
\end{remark}

Finally, we indicate an  important fact that $x_{i}(t)\;(i=2,3,\cdots,n)$ can always 
be regarded as an approximation of the corresponding $(i-1)-$th derivative of $v(t)$ in terms of generalized
derivative whatever the classical derivatives of $v(t)$ exist or not.
In fact, for any $a>0$, let $C^{\infty}_{0}(0,a)$ be the set that contains all
infinitely differentiable functions with compact support on $(0,a)$.
Remember that for any  locally integrable function $h:(0,a)\rightarrow \mathbb{R}$,
the usual $(i-1)$-th generalized
derivative of $h$, still denoted by $h^{(i-1)}$, always exists in the sense of distribution
defined as a functional on $C^{\infty}_{0}(0,a)$ that
\begin{equation}\label{definitiondervc}
h^{(i-1)}(\varphi)=(-1)^{i-1}\int^{a}_{0}h(t)\varphi^{(i-1)}(t)dt,
\end{equation}
for every test function $\varphi\in C^{\infty}_{0}(0,a)$ and $2\leq i\leq n$ (see, e.g., \cite[p.43]{Guo2016}).
In addition, a generalized stochastic
process $\Phi$ is simply a random generalized function in the following sense:
For every test function $\varphi\in C^{\infty}_{0}(0,a)$, a random variable $\Phi(\varphi)$
is assigned such that the functional $\Phi$ on $C^{\infty}_{0}(0,a)$ is linear and continuous
(see, e.g., \cite[p.50]{duan2015}).
Thus, for each $i=2,3,\cdots,n$, $x_{i}$ itself can be regarded as a generalized stochastic
 process in the sense that
\begin{equation}\label{38fd}
x_{i}(\varphi)=\int^{a}_{0}x_{i}(t)\varphi(t)dt,\; \forall \varphi\in C^{\infty}_{0}(0,a).
\end{equation}

 For each $i=2,3,\cdots,n$, the state $x_{i}$ of  the TD (\ref{TD})  is convergent
to the $(i-1)$-th generalized derivative of the input signal $v$ in mean square
and almost sure sense, which is summarized in the following Theorem \ref{weakconvergence}.

\begin{theorem}\label{weakconvergence}
Suppose that $v:[0,a]\rightarrow \mathbb{R}$ is  continuously differentiable   and Assumption (A1) holds. Then,
for any initial value $x(0)\in \mathbb{R}^{n}$, the TD (\ref{TD}) admits a unique global solution, and
for any $\varphi\in C^{\infty}_{0}(0,a)$ and  all  $i=2,3,\cdots,n$, there holds
\begin{eqnarray}\label{315fd}
\hspace{-2.5cm}\mbox{(i)}\;\;\;\limsup_{r\to \infty} \mathbb{E}|x_{i}(\varphi)-v^{(i-1)}(\varphi)|^{2}
\cr \hspace{-1.6cm}\;\;\; \leq  a^{2}\sup_{t\in(0,a)}|\varphi^{(i-1)}(t)|^{2}\sigma^{2}_{1}\gamma_{1};
\end{eqnarray}
\begin{eqnarray}\label{316d}
\hspace{-0.4cm}\mbox{(ii)}\;\lim_{r\to \infty}|x_{i}(\varphi)-v^{(i-1)}(\varphi)|=0 \;\; \mbox{almost surely}
\end{eqnarray}
  when the additive colored noise affecting the input signal is vanishing, i.e., $\sigma_{1}=0$.
\end{theorem}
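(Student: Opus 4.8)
The plan is to reduce both assertions to the zeroth-order estimates for $x_1(t)-v(t)$ already proved in Theorem \ref{theorem}, and then transport them to the higher-order generalized derivatives by moving all differentiations onto the test function. The key structural observation is that the first $n-1$ equations of the TD \dref{TD} are pure integrators, $dx_i=x_{i+1}\,dt$, so that along almost every sample path $x_1(\cdot)\in C^{n-1}$ with $x_1^{(i-1)}=x_i$ for $i=2,\dots,n$; the last component $x_n$ is merely continuous because of the Brownian term, which is exactly why the statement stops at $i=n$. Since $\varphi$ and all its derivatives vanish at the endpoints, integrating by parts $i-1$ times in the definition \dref{38fd} produces no boundary contributions and, combined with \dref{definitiondervc}, yields the bridging identity
\begin{equation*}
x_i(\varphi)-v^{(i-1)}(\varphi)=(-1)^{i-1}\int_0^a\left[x_1(t)-v(t)\right]\varphi^{(i-1)}(t)\,dt ,
\end{equation*}
which exhibits every higher-order error as a weighted average of the zeroth-order error $x_1-v$.

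Next I would dispose of the only delicate point, the transient of the tracking error near $t=0$ where the uniform-in-time bounds of Theorem \ref{theorem} are unavailable. This is handled entirely by the compact support of $\varphi$: choosing $\delta>0$ with $\mathrm{supp}\,\varphi\subset[\delta,a-\delta]$, the derivative $\varphi^{(i-1)}$ is supported in the same interval, so the bridging integral runs effectively over $[\delta,a-\delta]$, which is bounded away from $0$. As $v$ is $C^1$ on the compact interval $[0,a]$ it is bounded there together with its derivative, and I would extend it to a bounded $C^1$ function on $[0,\infty)$; by causality of \dref{TD} this extension leaves $x_1$ unchanged on $[0,a]$, so Theorem \ref{theorem} applies with $T=\delta$ (note $r\in R_0$ for all large $r$, since the left-hand side of \dref{37df} tends to $0$), furnishing the needed estimates uniformly on $[\delta,a-\delta]$. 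Global existence and uniqueness of the solution is inherited from Lemma \ref{lemmasef} applied to the extended input.

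For part (i) I would bound the test-function factor in the bridging identity by its supremum and apply Cauchy-Schwarz to $\int_\delta^{a-\delta}|x_1-v|\,dt$, then take expectations to get
\begin{equation*}
\mathbb{E}\left|x_i(\varphi)-v^{(i-1)}(\varphi)\right|^2\le (a-2\delta)\sup_{t\in(0,a)}|\varphi^{(i-1)}(t)|^2\int_\delta^{a-\delta}\mathbb{E}|x_1(t)-v(t)|^2\,dt .
\end{equation*}
Inserting the uniform bound \dref{errord}, letting $r\to\infty$ so that the $\Gamma/r$ term vanishes, and finally letting $\mu\to0^+$, the integrand is controlled by $\sigma_1^2\gamma_1$; the integral over an interval of length $a-2\delta\le a$ then gives the claimed bound $a^2\sup_{t}|\varphi^{(i-1)}(t)|^2\sigma_1^2\gamma_1$. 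For part (ii), when $\sigma_1=0$ the almost-sure uniform convergence \dref{319fd} yields $\sup_{t\in[\delta,a-\delta]}|x_1(t)-v(t)|\to0$ a.s., so the bridging identity is bounded in absolute value by $(a-2\delta)\sup_{t}|\varphi^{(i-1)}(t)|\,\sup_{[\delta,a-\delta]}|x_1-v|\to0$ almost surely.

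The main obstacle is conceptual rather than computational: one must recognize that integration by parts shifts all derivatives onto the smooth, compactly supported test function, so that no control of the derivatives of $x_1$ beyond order $n-1$ — in particular none on the non-differentiable component $x_n$ — is required, and that this same compact support is precisely what quarantines the uncontrolled transient at $t=0$ inside $[\delta,a-\delta]$. Once these two facts are in place, everything reduces to Theorem \ref{theorem} together with two applications of Cauchy-Schwarz.
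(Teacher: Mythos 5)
Your proof is correct and rests on the same backbone as the paper's: the integration-by-parts identity $x_i(\varphi)-v^{(i-1)}(\varphi)=(-1)^{i-1}\int_0^a\bigl(x_1(t)-v(t)\bigr)\varphi^{(i-1)}(t)\,dt$, followed by an application of the mean-square estimate of Theorem \ref{theorem}. You diverge in two technical choices, both of which streamline the argument. First, to handle the transient near $t=0$ the paper keeps the integral over all of $(0,a)$, splits it at an auxiliary $\xi>0$, bounds the piece on $[0,\xi]$ by $\xi a\max_{0\le t\le\xi}\mathbb{E}|x_1(t)-v(t)|^2$ and sends $\xi\to0$ only at the end; for the $\limsup_{r\to\infty}$ step this implicitly needs that maximum to be bounded uniformly in $r$, which follows from \dref{515dfd} but is not spelled out. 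You instead observe that $\mathrm{supp}\,\varphi^{(i-1)}\subset[\delta,a-\delta]$ for some $\delta>0$, so the integral never sees $[0,\delta)$ and Theorem \ref{theorem} with $T=\delta$ applies outright; this eliminates the $\xi$-limit and the uniformity issue altogether. Second, for part (ii) the paper reruns the Chebyshev--Borel--Cantelli argument on the scalar random variable $x_i(\varphi)-v^{(i-1)}(\varphi)$, whereas you deduce the conclusion pathwise from the almost-sure uniform convergence \dref{319fd} already established in Theorem \ref{theorem}; this is more direct, at the mild price of relying on the uniform-in-$t$ reading of \dref{319fd}. Your explicit remarks on extending $v$ from $[0,a]$ to a bounded $C^1$ function on $[0,\infty)$ (with causality preserving $x_1$ on $[0,a]$) and on $r\in R_0$ for all large $r$ address hypotheses the paper uses tacitly. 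Both routes yield the stated constant, since $(a-2\delta)^2\le a^2$.
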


\begin{proof}
See `` Proof of Theorem \ref{weakconvergence}'' in Appendix C.
\end{proof}

\hspace{0.5cm}

\begin{remark}
The   convergence of linear TD without requiring Assumption (A1) can be concluded directly
from Theorems \ref{theorem} and  \ref{weakconvergence}.
This is because the matrix $A$ in (\ref{matric2}) defined by the designed parameters 
$a_{i}\;(i=1,2,\cdots,n)$  is Hurwitz so that there exists a unique
 positive definite matrix solution $Q$ to the
Lyapunov equation $QA+A^\top Q=-I_{n\times n}$ for $n$-dimensional
identity matrix $I_{n\times n}$. For this reason, we can
define the Lyapunov functions $V:\mathbb{R}^{n}\to [0,\infty)$  and
$W:\mathbb{R}^{n}\to [0,\infty)$ by
$V(z)=zQz^\top $  and $W(z)=\|z\|^2$ for $z \in \mathbb{R}^{n}$, respectively.
It is then easy to verify that all conditions
in Assumption (A1)  are satisfied, where the parameters in
Assumptions (A1) are specified as
$\lambda_{1}=\lambda_{\min}(Q)$, $\lambda_{2}=\lambda_{\max}(Q)$,
$\lambda_{3}=\lambda_{4}=1$, $c_{1}=c_{2}=2\lambda_{\max}(Q)$,
with $\lambda_{\min}(Q)$ and $\lambda_{\max}(Q)$ being  respectively
the minimal   and maximal eigenvalues of the matrix $Q$.
\end{remark}

\begin{remark}
The present  paper focuses only on  the convergence and noise-tolerant performance for { TD} in the presence
of multiple stochastic disturbances. However, in practical applications,  there may exist
 phase lags because of using the integration  of TD to estimate the  derivatives of the input signal,
which can  be overcome by  introducing feedforward in the design of TD (\cite{tian2013}).
\end{remark}

\section{Numerical Simulations}\label{Se4}
In this section, some numerical simulations are presented to
verify the effectiveness  of the main results. Let the
input signal be $v(t)=\sin(3t+1)$. We design a second-order linear TD
and a second-order nonlinear TD in the form of (\ref{TD}) in the presence of 
multiple stochastic disturbances.
The linear TD is produced by a linear function given by
\begin{equation}\label{linearfunctiofgn}
f(z_{1},z_{2})=-2z_{1}-4z_{2},\; \forall (z_{1},z_{2})\in \mathbb{R}^{2}.
\end{equation}
Motivated by the nonlinear feedback controller design, a nonlinear function used for the
 construction of nonlinear TD can be the linear function (\ref{linearfunctiofgn})  adding
with a Lipschitz continuous function given by
\begin{equation}
f(z_{1},z_{2})=-2z_{1}-4z_{2}-\phi(z_{1}),\; \forall (z_{1},z_{2})\in \mathbb{R}^{2},
\end{equation}
where
\begin{equation}
\phi(s)= \left\{\begin{array}{ll}\disp -\frac{1}{4\pi}, &
s\in(-\infty,-\frac{\pi}{2}) , \crr\disp \frac{1}{4\pi}\sin s,&
s\in[-\frac{\pi}{2},\frac{\pi}{2}],\crr\disp \frac{1}{4\pi},& s\in (\frac{\pi}{2},+\infty),
\end{array}\right.
\end{equation}
and the Assumption (A1) holds for (\cite[p.196]{Guo2016})
\begin{eqnarray}
&&V(z_{1},z_{2})=1.375z^{2}_{1}+0.1875z^{2}_{2}+0.5z_{1}z_{2}, \cr &&
W(z_{1},z_{2})=0.5z^{2}_{1}+0.5z^{2}_{2},
\lambda_{1}=0.13, \lambda_{2}=1.43, \cr&& \lambda_{3}=\lambda_{4}=0.5,c_{1}=3.91, c_{2}=2.75.
\end{eqnarray}

In Figures \ref{figure1}-\ref{figure3}, some relative parameters  are chosen as
\begin{eqnarray}
\alpha_{1}=\alpha_{2}=3, \beta_{1}=\beta_{2}=\frac{1}{18}, \sigma_{1}=0.2, \;\sigma_{2}=\sigma_{3}=2,
\end{eqnarray}
the initial values are taken  as
\begin{eqnarray}
x_{1}(0)=\sin(1), x_{2}(0)=0, w_{1}(0)=1, w_{2}(0)=-1,
\end{eqnarray}
the sampling period $\Delta t=0.001$, and
 the diffusion terms $dB_{i}(t)\;(i=1,2,3)$ are simulated by $\sqrt{\Delta t}$ 
 multiplied by random sequences generated by the Matlab program command ``randn".
  The selection of $r$ can be specified by (\ref{37df}). In Figures \ref{figure1}-\ref{figure2} and  \ref{figure3},
 we choose $r=30$ and $r=15$, respectively, and it can be easily verified for the
nonlinear case that if $r=15$, $\frac{1}{r}+\frac{1}{2r^{2n-1}}=\frac{1}{15}+\frac{1}{2\times 15^{3}}\approx0.07<
 \frac{\theta\lambda_{3}}{\lambda_{2}}\approx 0.17$, i.e., $r=15\in R_{0}$ and then $r=30\in R_{0}$, where we set $\theta=0.5$.

It is seen from Figure \ref{figure1}  that
the states $x_{1}(t)$ and $x_{2}(t)$ of the linear TD track quickly the input signal $v(t)=\sin{(3t+1)}$
and the derivative of the input signal, respectively. It is also observed from Figure \ref{figure2}
that the tracking effect of the nonlinear TD is at least as good as the  linear TD.
These are consistent with the theoretical result that the  tracking error becomes small 
after any given  time $T>0$ with the choice of an appropriate tuning parameter $r$.
Since in Figure \ref{figure3} the tuning parameter $r$ is  reduced to be $r=15$,
it can be seen that the tracking accuracy of the nonlinear TD is relatively not as good as  Figure  \ref{figure2},
  which is consistent with the theoretical result that
the upper bound of the tracking error is inverse proportional  to the tuning parameter $r$. 
In addition, it can be observed that the peaking value
phenomenon does not exist in  Figures \ref{figure1}-\ref{figure3}.

\begin{figure}[!htb]\centering
\subfigure[]
 {\hspace{0cm}\includegraphics[width=8.5cm,height=5.2cm]{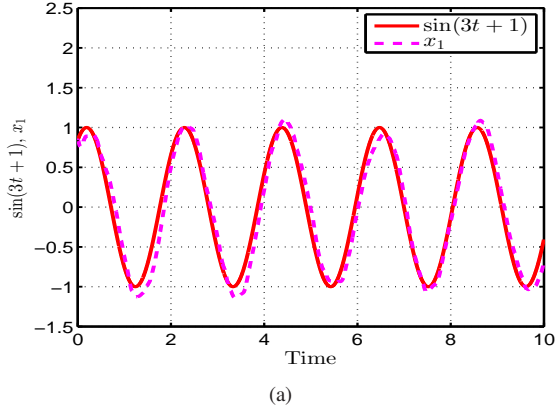}}
 \subfigure[]
 {\hspace{0cm}\includegraphics[width=8.5cm,height=5.2cm]{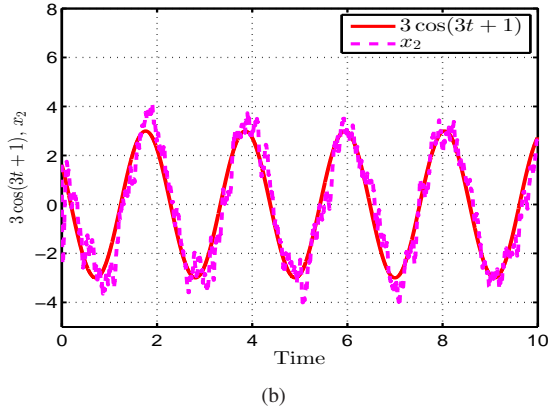}}
 \caption{\normalsize The tracking effect of linear TD with $r=30$.}\label{figure1}
\end{figure}
\begin{figure}[!htb]\centering
\subfigure[]
 {\hspace{0cm}\includegraphics[width=8.5cm,height=5.2cm]{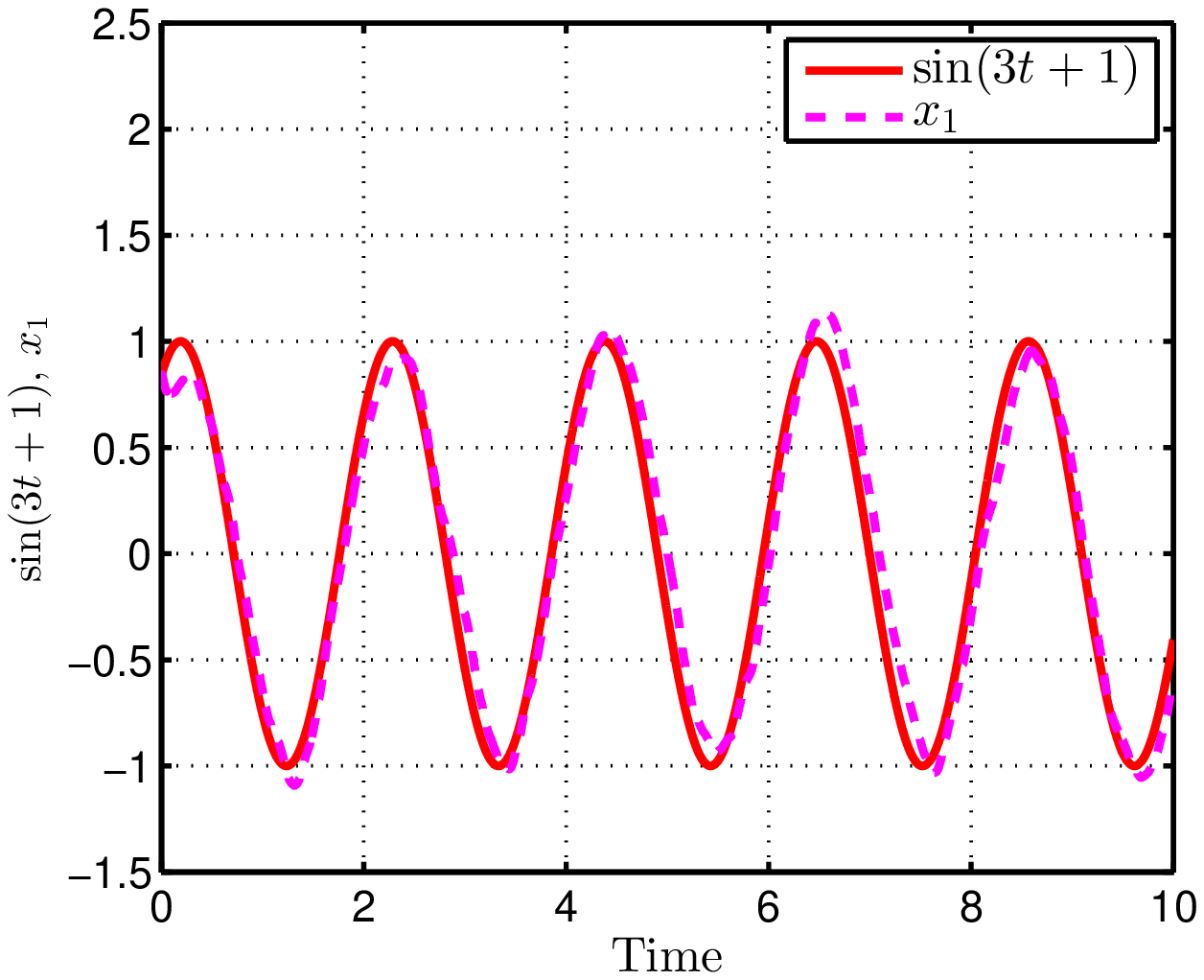}}
 \subfigure[]
 {\hspace{0cm}\includegraphics[width=8.5cm,height=5.2cm]{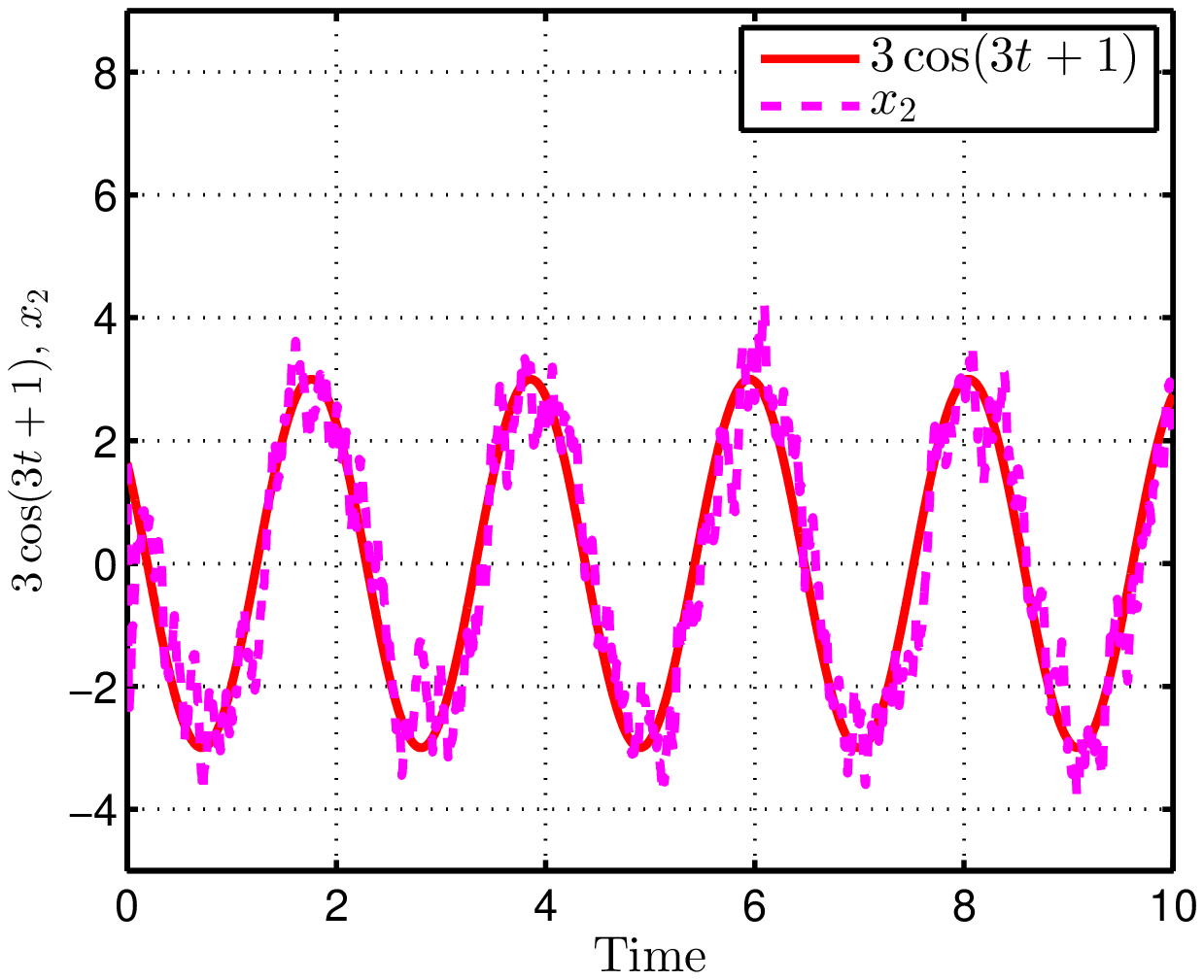}}
\caption{\normalsize The tracking effect of nonlinear TD with $r=30$.}\label{figure2}
\end{figure}
\begin{figure}[!htb]\centering
\subfigure[]
 {\hspace{0cm}\includegraphics[width=8.5cm,height=5.2cm]{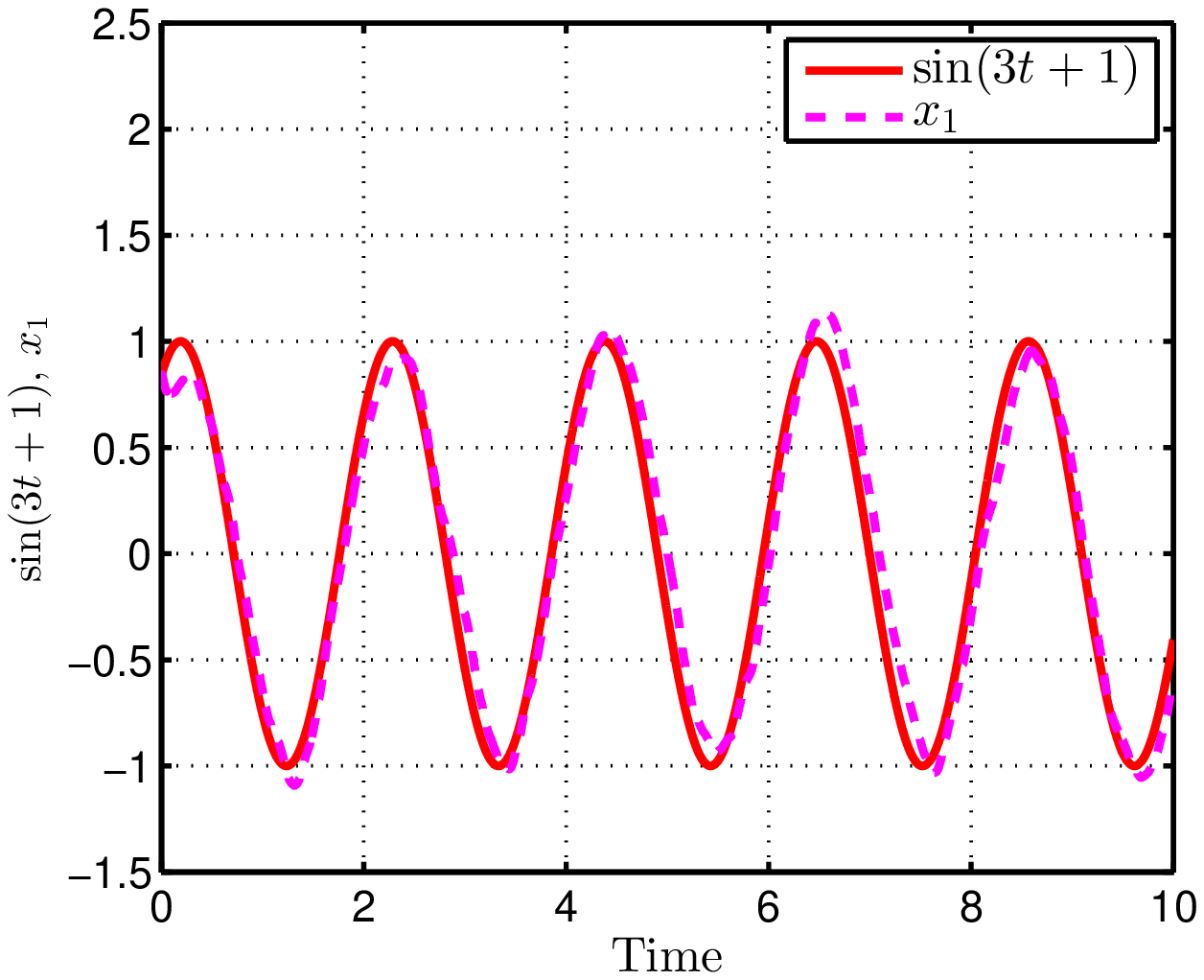}}
\subfigure[]
 {\hspace{0cm}\includegraphics[width=8.5cm,height=5.2cm]{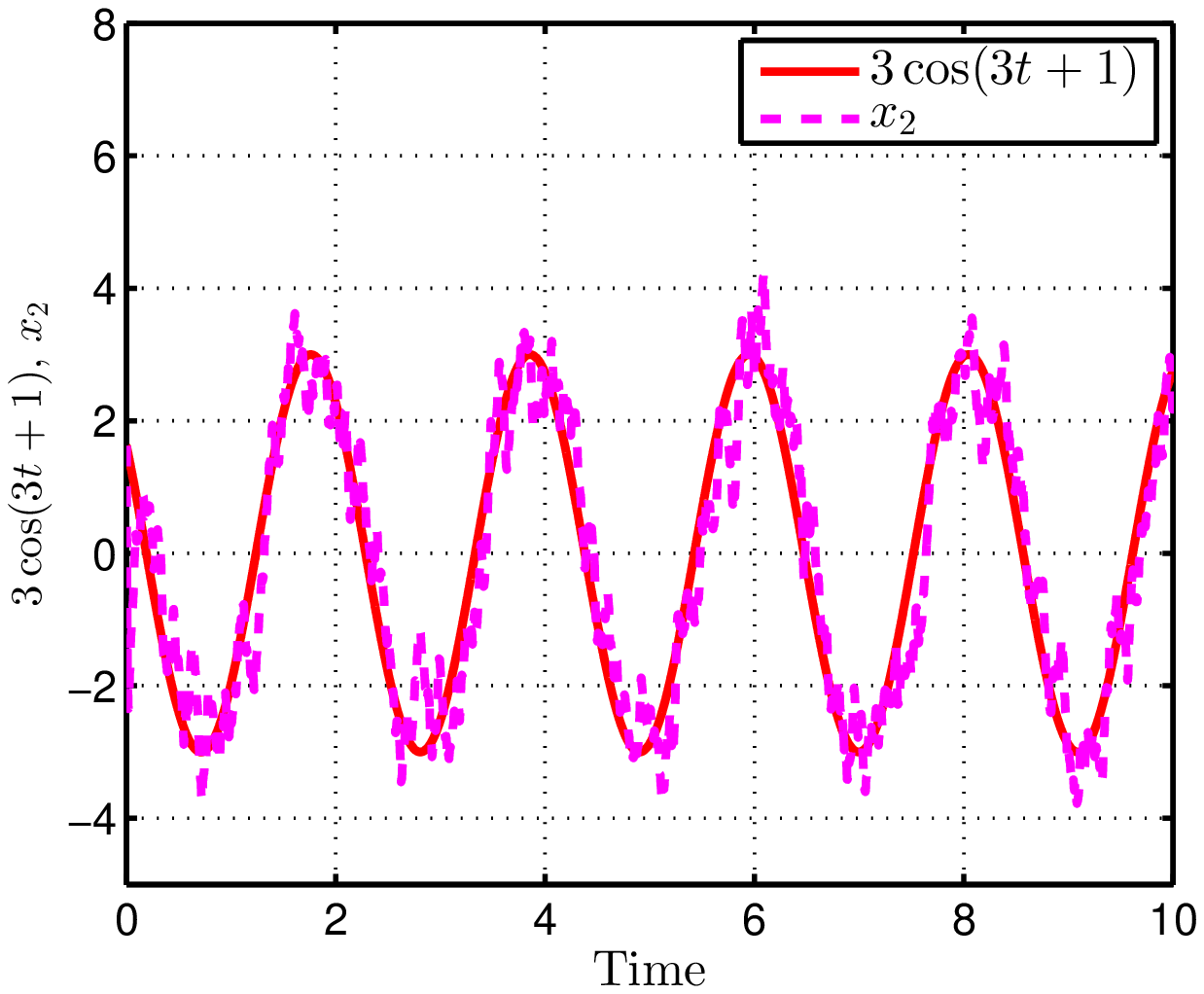}}
\caption{\normalsize The tracking effect of nonlinear TD with $r=15$.}\label{figure3}
\end{figure}

\section{Concluding remarks}\label{Se5}
The convergence and noise-tolerant performance of a tracking differentiator (TD)  are  investigated,
where a general case is considered that the input signal and the TD itself are disturbed  by  additive colored noise
and  additive colored and white noises, respectively. The mathematical  proofs are presented to show that the tracking errors of
the states of TD to the input signal and its generalized derivatives are  convergent in mean square
 and even in almost sure  sense for the special vanishing input noise.
Some numerical simulations are presented to demonstrate the validity of the proposed TD.
Finally, it is worth mentioning that the noise intensity maximum tolerance analysis of TD
 would be a potential interesting problem to be further investigated in the future study.

\begin{center}
APPENDIX A: Proof of Lemma \ref{lemmasef}
\end{center}


 By (\ref{21equ}), $w_{i}(t)\;(i=1,2)$ can be regarded as the augmented state variables of system  (\ref{kerequ}).
Since the function $f(\cdot)$ satisfies the local Lipschitz condition, it follows from the 
existence-and-unique theorem for It\^{o}-type stochastic systems (see, e.g.,\;\cite[p.58,Theorem 3.6]{mao})
 that there exist a unique maximal local solution $y(t)$ over  $t\in [0,\tau)$ where $\tau$ is the explosion time. 
 To show that  $y(t)$ is a global solution, we only  need to show $\tau=\infty$ almost surely.  
 For every integer $m\geq 1$, define the stopping time
$\tau_{m}= \tau\wedge\inf\{t: 0\leq t<\tau, \|y(t)\|\geq m\}$,
and set $\inf\emptyset =\infty$. Since $\{t: 0\leq t<\tau, \|y(t)\|\geq m+1\}\subset \{t: 0\leq t<\tau, \|y(t)\|\geq m\}$,
we have $\tau_{m}\uparrow \tau$ almost surely as $m\rightarrow \infty$. By the It\^{o}'s formula, it yields
\begin{eqnarray}\label{198f}
\disp &&\hspace{-0.7cm} V(y(t))
=V(y(0)) \cr\disp && \hspace{-0.7cm} + \int^{t}_{0}\left[\sum^{n-1}_{i=1}\frac{\partial V(y(s))}{\partial
y_{i}}y_{i+1}(s)+\frac{\partial V(y(s))}{\partial
y_{n}}f(y(s))\right]ds \cr \disp && \hspace{-0.7cm}+\int^{t}_{0}\left[\frac{\partial V(y(s))}{\partial
y_{1}}(-\frac{1}{r}\frac{dv(u)}{du}|_{u=\frac{s}{r}}+\frac{\sigma_{1}\alpha_{1}}{r}w_{1}(\frac{s}{r}))\right]ds \cr\disp &&\hspace{-0.8cm}
+\frac{\sigma^{2}_{1}\alpha^{2}_{1}\beta_{1}}{r}\int^{t}_{0}\frac{\partial^{2} V(y(s))}{\partial y^{2}_{1}}ds
+\frac{\sigma_{2}}{r^{n}}\int^{t}_{0}\frac{\partial V(y(s))}{\partial
y_{n}}w_{2}(\frac{s}{r})ds\cr\disp &&\hspace{-0.8cm}+\frac{\sigma^{2}_{3}}{2r^{2n-1}}\int^{t}_{0}
\frac{\partial^{2} V(y(s))}{\partial y^{2}_{n}}ds-\int^{t}_{0}\frac{\sigma_{1}\alpha_{1}\sqrt{2\beta_{1}}}{\sqrt{r}}\frac{\partial V(y(s))}{\partial
y_{1}}d\hat{B}_{1}(s)\cr\disp &&\hspace{-0.8cm}+\int^{t}_{0}\frac{\sigma_{3}}{r^{n-\frac{1}{2}}}\frac{\partial V(y(s))}{\partial
y_{n}}d\hat{B}_{3}(s).
\end{eqnarray}
Thus, it follows from Assumption (A1), (\ref{11boundedness}) and Young's inequality that
\begin{eqnarray}\label{fdf20}
\disp &&\hspace{-0.6cm} \mathbb{E}\left(\sup_{0\leq u\leq t}V(y(u\wedge \tau_{m}))\right)
\cr\disp && \hspace{-0.6cm}\leq \mathbb{E}V(y(0))
+\mathbb{E}\sup_{0\leq u\leq t}\int^{u\wedge\tau_{m}}_{0}\left[\frac{1}{2\lambda_{1}r}V(y(s))+\frac{c^{2}_{1}M^{2}}{2r}
\right. \cr\disp &&\hspace{-0.6cm} \left.+\frac{1}{2\lambda_{1}r}V(y(s))+\frac{c^{2}_{1}\sigma^{2}_{1}\alpha^{2}_{1}}{2r}|w_{1}(\frac{s}{r})|^{2}+
\frac{c_{2}\sigma^{2}_{1}\alpha^{2}_{1}\beta_{1}}{r}\right. \cr\disp &&\hspace{-0.6cm}
 \left.+\frac{1}{2\lambda_{1}r^{n}}V(y(s))
 +\frac{c^{2}_{1}\sigma^{2}_{2}}{2r^{n}}|w_{2}(\frac{s}{r})|^{2}+\frac{c_{2}\sigma^{2}_{3}}{2r^{2n-1}}\right]ds
\cr\disp &&\hspace{-0.6cm}+\mathbb{E}\left(\sup_{0\leq u\leq t}\int^{u\wedge\tau_{m}}_{0}
-\frac{\sigma_{1}\alpha_{1}\sqrt{2\beta_{1}}}{\sqrt{r}}\frac{\partial V(y(s))}{\partial
y_{1}}d\hat{B}_{1}(s)\right)\cr\disp &&\hspace{-0.6cm}
+\mathbb{E}\left(\sup_{0\leq u\leq t}\int^{u\wedge\tau_{m}}_{0}\frac{\sigma_{3}}{r^{n-\frac{1}{2}}}\frac{\partial V(y(s))}{\partial
y_{n}}d\hat{B}_{3}(s)\right)\cr\disp &&\hspace{-0.6cm}
\leq \mathbb{E}V(y(0))+\int^{t}_{0}\left[\frac{c^{2}_{1}M^{2}}{2r}+\frac{c^{2}_{1}\sigma^{2}_{1}\alpha^{2}_{1}\gamma_{1}}{2r}
+\frac{c_{2}\sigma^{2}_{1}\alpha^{2}_{1}\beta_{1}}{r}
\right.\cr\disp && \hspace{-0.6cm}\left.+\frac{c^{2}_{1}\sigma^{2}_{2}\gamma_{2}}{2r^{n}}+\frac{c_{2}\sigma^{2}_{3}}{2r^{2n-1}}
+(\frac{1}{\lambda_{1}r}+\frac{1}{2\lambda_{1}r^{n}})\mathbb{E}V(y(s\wedge\tau_{m}))\right]ds
\cr\disp &&\hspace{-0.6cm}
+\mathbb{E}\left(\sup_{0\leq u\leq t}\int^{u\wedge\tau_{m}}_{0}-\frac{\sigma_{1}\alpha_{1}\sqrt{2\beta_{1}}}{\sqrt{r}}\frac{\partial V(y(s))}{\partial
y_{1}}d\hat{B}_{1}(s)\right)\cr\disp &&\hspace{-0.6cm}
+\mathbb{E}\left(\sup_{0\leq u\leq t}\int^{u\wedge\tau_{m}}_{0}\frac{\sigma_{3}}{r^{n-\frac{1}{2}}}\frac{\partial V(y(s))}{\partial
y_{n}}d\hat{B}_{3}(s)\right).
\end{eqnarray}
By Assumption (A1),  $\int^{t\wedge \tau_{m}}_{0}-\frac{\sigma_{1}\alpha_{1}\sqrt{2\beta_{1}}}{\sqrt{r}}\frac{\partial V(y(s))}{\partial y_{1}}d\hat{B}_{1}(s)$ is a martingale on $t\geq 0$,
 and so is for  $\int^{t\wedge \tau_{m}}_{0}\frac{\sigma_{3}}{r^{n-\frac{1}{2}}}\frac{\partial V(y(s))}{\partial y_{n}}d\hat{B}_{3}(s)$.
By the  Burkholder-Davis-Gundy inequality (\cite[Theorem 1.7.3]{mao}),
\begin{eqnarray}\label{fdf21}
\disp && \mathbb{E}\left(\sup_{0\leq u\leq t}\int^{u\wedge\tau_{m}}_{0}-\frac{\sigma_{1}\alpha_{1}\sqrt{2\beta_{1}}}{\sqrt{r}}\frac{\partial V(y(s))}{\partial
y_{1}}d\hat{B}_{1}(s)\right) \cr\disp &&\hspace{-0.2cm}
\leq 4\sqrt{2}\mathbb{E}\left(\int^{t}_{0}|\frac{\sigma_{1}\alpha_{1}\sqrt{2\beta_{1}}}{\sqrt{r}}\frac{\partial V(y(s\wedge\tau_{m}))}{\partial
y_{1}}|^{2}ds\right)^{\frac{1}{2}} \cr\disp &&\hspace{-0.2cm}
\leq\frac{16c^{2}_{1}\sigma^{2}_{1}\alpha^{2}_{1}\beta_{1}}{r}+\frac{1}{\lambda_{1}}\int^{t}_{0}\mathbb{E}V(y(s\wedge\tau_{m}))ds,
\end{eqnarray}
and
\begin{eqnarray}\label{fdf22}
\disp &&\hspace{-0.4cm} \mathbb{E}\left(\sup_{0\leq u\leq t}\int^{u\wedge\tau_{m}}_{0}\frac{\sigma_{3}}{r^{n-\frac{1}{2}}}\frac{\partial V(y(s))}{\partial
y_{n}}d\hat{B}_{3}(s)\right) \cr \disp&& \hspace{-0.4cm}
\leq \frac{4\sqrt{2}\sigma_{3}}{r^{n-\frac{1}{2}}}\mathbb{E}\left(\int^{t}_{0}|\frac{\partial V(y(s\wedge\tau_{m}))}{\partial
y_{n}}|^{2}ds\right)^{\frac{1}{2}}\cr\disp &&\hspace{-0.4cm}\leq \frac{4\sqrt{2}\sigma_{3}c_{1}}{r^{n-\frac{1}{2}}\sqrt{\lambda_{1}}}\mathbb{E}\left(\sup_{0\leq s\leq t}V(y(s\wedge\tau_{m}))\int^{t}_{0}1ds\right)^{\frac{1}{2}}\cr\disp &&\hspace{-0.4cm}
\leq \frac{1}{2}\mathbb{E}(\sup_{0\leq s\leq t}V(y(s\wedge\tau_{m})))+\int^{t}_{0}\frac{16c^{2}_{1}\sigma^{2}_{3}}{r^{2n-1}\lambda_{1}}ds.
\end{eqnarray}
Combining (\ref{fdf20}), (\ref{fdf21}) and  (\ref{fdf22}),  we obtain
\begin{eqnarray}
\disp &&\hspace{-0.6cm} \mathbb{E}\left(\sup_{0\leq s\leq t}V(y(s\wedge \tau_{m}))\right)
\cr&&\hspace{-0.9cm} \leq N_{1}+\int^{t}_{0}(N_{2}+N_{3}\mathbb{E}V(y(s\wedge\tau_{m})))ds
\cr&&\hspace{-0.9cm}= N_{1}+N_{3}\int^{t}_{0}(\frac{N_{2}}{N_{3}}+\mathbb{E}\sup_{0\leq u\leq s}V(y(u\wedge\tau_{m})))ds,
\end{eqnarray}
where we set
\begin{eqnarray}\label{constantsds}
\disp && N_{1}=2\mathbb{E}V(y(0))+\frac{32c^{2}_{1}\sigma^{2}_{1}\alpha^{2}_{1}\beta_{1}}{r}, \cr\disp&&
 N_{2}=\frac{c^{2}_{1}M^{2}}{r}+\frac{c^{2}_{1}\sigma^{2}_{1}\alpha^{2}_{1}\gamma_{1}}{r}+\frac{2c_{2}\sigma^{2}_{1}\alpha^{2}_{1}\beta_{1}}{r}
+\frac{c^{2}_{1}\sigma^{2}_{2}\gamma_{2}}{r^{n}}\cr\disp&&+\frac{c_{2}\sigma^{2}_{3}}{r^{2n-1}}+\frac{32c^{2}_{1}\sigma^{2}_{3}}{r^{2n-1}\lambda_{1}},  \cr\disp&& N_{3}=\frac{2}{\lambda_{1}r}+\frac{1}{\lambda_{1}r^{n}}+\frac{2}{\lambda_{1}}.
\end{eqnarray}
Now, applying  Gronwall's inequality (\cite[Theorem 1.8.1]{mao}) yields
\begin{equation}
 \frac{N_{2}}{N_{3}}+\mathbb{E}\left(\sup_{0\leq s\leq t}V(y(s\wedge\tau_{m}))\right) \leq
\left(N_{1}+\frac{N_{2}}{N_{3}}\right)e^{N_{3}t}.
\end{equation}
Thus,
\begin{equation}\label{315fdv}
\begin{array}{l}
\disp  \mathbb{E}\left(\sup_{0\leq s\leq t\wedge\tau_{m}}\|y(s)\|^{2}\right)\crr \disp
 \leq
\frac{1}{\lambda_{1}}\mathbb{E}\left(\sup_{0\leq s\leq t\wedge\tau_{m}}V(y(s))\right)\leq \frac{1}{\lambda_{1}}\left(N_{1}+\frac{N_{2}}{N_{3}}\right)e^{N_{3}t}.
\end{array}
\end{equation}
This implies that
\begin{eqnarray}
\disp &&\hspace{-0.9cm} m^{2}P\{\tau_{m}\leq t\}\leq \frac{1}{\lambda_{1}}\left(N_{1}+\frac{N_{2}}{N_{3}}\right)e^{N_{3}t}.
\end{eqnarray}
Passing to the limit as  $m \rightarrow\infty$ gives  $P\{\tau\leq t\}=0$ which yields in turn $P\{\tau> t\}=1$.
Since $t\geq 0$ is arbitrary, we then have $\tau=\infty$ almost surely, and   $y(t)$ exists  globally.
Furthermore, passing to  the limit as  $m \rightarrow\infty$ for (\ref{315fdv}) again gives (\ref{fde4})  from Fatou's Lemma.
This completes the proof of the Lemma \ref{lemmasef}.
\begin{center}
APPENDIX B:  Proof of Theorem \ref{theorem}
\end{center}

It follows from Lemma \ref{lemmasef} that the TD (\ref{TD}) admits a unique global solution and
$\int^{t}_{0}\frac{\sigma_{1}\alpha_{1}\sqrt{2\beta_{1}}}{\sqrt{r}}\frac{\partial V(y(s))}{\partial y_{1}}d\hat{B}_{1}(s)$  
is a martingale on $t\geq 0$,   and
 so is for  $\int^{t}_{0}\frac{\sigma_{3}}{r^{n-\frac{1}{2}}}\frac{\partial V(y(s))}{\partial y_{n}}d\hat{B}_{3}(s)$.
Thus, for all $t\geq 0$, it follows that
\begin{eqnarray}\label{3843}
\disp && \mathbb{E}\int^{t}_{0}\frac{\sigma_{1}\alpha_{1}
\sqrt{2\beta_{1}}}{\sqrt{r}}\frac{\partial V(y(s))}{\partial y_{1}}d\hat{B}_{1}(s)=0, \cr\disp &&
\mathbb{E}\int^{t}_{0}\frac{\sigma_{3}}{r^{n-\frac{1}{2}}}\frac{\partial V(y(s))}{\partial
y_{n}}d\hat{B}_{3}(s)=0.
\end{eqnarray}
 Finding the derivative of $\mathbb{E}V(y(t))$ with respect to $t$, it follows
from Assumption (A1), (\ref{11boundedness}), (\ref{198f}), (\ref{3843}), $r\in \mathbb{R}_{0}$ and Young's inequality that
\begin{eqnarray}
\disp && \frac{d\mathbb{E}V(y(t))}{dt}\cr\disp&& =
 \mathbb{E}\left(\sum^{n-1}_{i=1}\frac{\partial V(y(t))}{\partial
y_{i}}y_{i+1}(t)+\frac{\partial V(y(t))}{\partial
y_{n}}f(y(t))\right)
\cr\disp&& +\mathbb{E}\left(\frac{\partial V(y(t))}{\partial
y_{1}}(-\frac{1}{r}\frac{dv(u)}{du}|_{u=\frac{t}{r}}+\frac{\sigma_{1}\alpha_{1}}{r}w_{1}(\frac{t}{r}))\right)
\cr\disp&&  +\mathbb{E}\left(\frac{\sigma^{2}_{1}\alpha^{2}_{1}\beta_{1}}{r}\frac{\partial^{2} V(y(t))}{\partial y^{2}_{1}}
+\frac{\sigma_{2}}{r^{n}}\frac{\partial V(y(t))}{\partial y_{n}}w_{2}(\frac{t}{r})\right)
\cr\disp && + \frac{\sigma^{2}_{3}}{2r^{2n-1}}\mathbb{E}\frac{\partial^{2} V(y(t))}{\partial y^{2}_{n}}
\cr\disp&&\leq -\mathbb{E}W(y(t))+\frac{\lambda_{1}}{2r}\mathbb{E}\|y(t)\|^{2}
+\frac{c^{2}_{1}M^{2}}{2\lambda_{1}r}+\frac{\lambda_{1}}{2r}\mathbb{E}\|y(t)\|^{2}
\cr\disp&&+\frac{c^{2}_{1}\sigma^{2}_{1}\alpha^{2}_{1}}{2\lambda_{1}r}\mathbb{E}|w_{1}(\frac{t}{r})|^{2}
+\frac{c_{2}\sigma^{2}_{1}\alpha^{2}_{1}\beta_{1}}{r}+\frac{\lambda_{1}}{2r^{2n-1}}\mathbb{E}\|y(t)\|^{2}
\cr\disp &&+\frac{c^{2}_{1}\sigma^{2}_{2}}{2\lambda_{1}r}\mathbb{E}|w_{2}(\frac{t }{r})|^{2}+\frac{c_{2}\sigma^{2}_{3}}{2r^{2n-1}}
\cr\disp && \leq -\frac{\lambda_{3}}{\lambda_{2}}\mathbb{E}V(y(t))+\frac{1}{2r}\mathbb{E}V(y(t))+\frac{c^{2}_{1}M^{2}}{2\lambda_{1}r}
\cr\disp&& +\frac{1}{2r}\mathbb{E}V(y(t))+\frac{c^{2}_{1}\sigma^{2}_{1}\alpha^{2}_{1}\gamma_{1}}{2\lambda_{1}r}+\frac{c_{2}\sigma^{2}_{1}\alpha^{2}_{1}\beta_{1}}{r}\cr &&+\frac{1}{2r^{2n-1}}\mathbb{E}V(y(t))+\frac{c^{2}_{1}\sigma^{2}_{2}\gamma_{2}}{2\lambda_{1}r}+\frac{c_{2}\sigma^{2}_{3}}{2r^{2n-1}}
\cr\disp&& \leq
-\frac{(1-\theta)\lambda_{3}}{\lambda_{2}}\mathbb{E}V(y(t))+\frac{\Gamma_{1}}{r},
\end{eqnarray}
where
\begin{eqnarray}\label{6153e}
&&\Gamma_{1}:=\frac{c^{2}_{1}M^{2}}{2\lambda_{1}}+\frac{c^{2}_{1}\sigma^{2}_{1}\alpha^{2}_{1}\gamma_{1}}{2\lambda_{1}}+c_{2}\sigma^{2}_{1}\alpha^{2}_{1}\beta_{1}
\cr&&+\frac{c^{2}_{1}\sigma^{2}_{2}\gamma_{2}}{2\lambda_{1}}
+\frac{c_{2}\sigma^{2}_{3}}{2},
\end{eqnarray}
and $\theta\in (0,1)$ is given in (\ref{37df}). This,  together with Assumption (A1), yields that
\begin{eqnarray}\label{515dfd}
&&\mathbb{E}V(y(t))\cr&&\leq e^{-\frac{(1-\theta)\lambda_{3}}{\lambda_{2}}t}\mathbb{E}V(y(0))+\frac{\Gamma_{1}}{r}\int^{t}_{0}e^{-\frac{(1-\theta)\lambda_{3}}{\lambda_{2}}(t-s)}ds\cr&&
\leq \lambda_{2}e^{-\frac{(1-\theta)\lambda_{3}}{\lambda_{2}}t}\mathbb{E}\|y(0)\|^{2}+\frac{\lambda_{2}\Gamma_{1}}{\lambda_{3}(1-\theta)r}.
\end{eqnarray}
Since
\begin{eqnarray}
&&\hspace{-1.2cm}\mathbb{E}\|y(0)\|^{2}\cr
&&\hspace{-1.2cm}=\mathbb{E}|x_{1}(0)-v(0)-\sigma_{1}w_{1}(0)|^{2}+\sum^{n-1}_{i=1}\frac{1}{r^{2i}}|x_{i+1}(0)|^{2},
\end{eqnarray}
it can be concluded that for any $T>0$,
 there exists a  positive constant
\begin{eqnarray}\label{618fd}
&&\Gamma_{2}:= \sup_{r\in R_{0}}(e^{-\frac{(1-\theta)\lambda_{3}}{\lambda_{2}}rT}r)\cdot[\mathbb{E}|x_{1}(0)-v(0)-\sigma_{1}w_{1}(0)|^{2}
\cr&&\hspace{0.5cm}+\sum^{n-1}_{i=1}|x_{i+1}(0)|^{2}]
\end{eqnarray}
 which is independent of $r$. This is because $g(r):=e^{-\frac{(1-\theta)\lambda_{3}}{\lambda_{2}}rT}r$ is  continuous with respect to
$r$. Since $\lim_{r\to \infty}g(r)=0$,  $g(r)$ is  bounded  on the domain $R_{0}$,
i.e., there is a positive constant $N$ independent of $r$ such that $N=\sup_{r\in R_{0}}(e^{-\frac{(1-\theta)\lambda_{3}}{\lambda_{2}}rT}r)$.
Hence, $\Gamma_2$ is independent of $r$ and so $e^{-\frac{(1-\theta)\lambda_{3}}{\lambda_{2}}rT}\mathbb{E}\|y(0)\|^{2}\leq \frac{\Gamma_{2}}{r}$.
Therefore, for all $t\in [T,\infty)$,
\begin{eqnarray}\label{519df}
\hspace{-1cm}&&\mathbb{E}|x_{1}(t)-v^{*}(t)|^{2}\cr\disp\hspace{-1cm} &&
=\mathbb{E}|y_{1}(rt)|^{2}\leq \mathbb{E}\|y(rt)\|^{2}\leq \frac{1}{\lambda_{1}}\mathbb{E}V(y(rt))
\cr\disp\hspace{-1cm} && \leq \frac{\lambda_{2}}{\lambda_{1}}e^{-\frac{(1-\theta)\lambda_{3}}{\lambda_{2}}rT}\mathbb{E}\|y(0)\|^{2}
+\frac{\lambda_{2}\Gamma_{1}}{\lambda_{3}(1-\theta)\lambda_{1}r}
 \leq \frac{\Gamma}{r},
\end{eqnarray}
where
\begin{eqnarray}\label{734fd}
\Gamma:=\frac{\lambda_{2}\Gamma_{2}}{\lambda_{1}}+\frac{\lambda_{2}\Gamma_{1}}{\lambda_{3}(1-\theta)\lambda_{1}}.
\end{eqnarray}
is a positive constant independent of $r$.
Using the inequality $(a+b)^{2}\leq (1+\frac{1}{\mu})a^{2}+(1+\mu) b^{2}$ for any $\mu>0$ and $a,b\in\mathbb{R}$,  it is
obtained by (\ref{11boundedness}) and (\ref{519df}) that
\begin{eqnarray}\label{330d}
&&\mathbb{E}|x_{1}(t)-v(t)|^{2}\cr\disp&&
\leq (1+\frac{1}{\mu})\mathbb{E}|x_{1}(t)-v^{*}(t)|^{2}+(1+\mu)\sigma^{2}_{1}\mathbb{E}|w_{1}(t)|^{2}
\cr\disp&& \leq \frac{(1+\frac{1}{\mu})\Gamma}{r}+(1+\mu)\sigma^{2}_{1}\gamma_{1}
\end{eqnarray}
uniformly in $t\in [T,\infty)$.
Since $\mu>0$ is arbitrary, it follows from (\ref{330d}) that
\begin{eqnarray}\label{superlimit}
\limsup_{r\to \infty} \mathbb{E}|x_{1}(t)-v(t)|^{2}\leq \sigma^{2}_{1}\gamma_{1}
\end{eqnarray}
uniformly in  $t\in [T,\infty)$. In addition, when $\sigma_{1}=0$, if follows from (\ref{superlimit}) that
\begin{eqnarray}
\lim_{r\to \infty} \mathbb{E}|x_{1}(t)-v(t)|^{2}=0
\end{eqnarray}
uniformly in  $t\in [T,\infty)$. Thus, for any $\varepsilon:=\frac{1}{m^{4}}>0,m\in \mathbb{N}^{+}$, there exists an $m$-dependent constant $r^{*}=r^{*}(m)$
such that
\begin{eqnarray}\label{524dfd}
\mathbb{E}|x_{1}(t)-v(t)|^{2}<\frac{1}{m^{4}}
\end{eqnarray}
uniformly in  $t\in [T,\infty)$ for all $r\geq r^{*}$. By Chebyshev's inequality (\cite[p.5]{mao}), it  has
\begin{eqnarray}
P\left\{\omega:|x_{1}(t)-v(t)|> \frac{1}{m}\right\}\leq\frac{1}{m^{2}}
\end{eqnarray}
uniformly in  $t\in [T,\infty)$ for all $r\geq r^{*}$. By Borel-Cantelli's lemma (\cite[p.7]{mao}), 
it can be also obtained that for almost all $\omega \in \Omega$,
there exists an $m_{0}=m_{0}(\omega)$ such that
\begin{eqnarray}
|x_{1}(t)-v(t)|\leq \frac{1}{m}
\end{eqnarray}
uniformly in  $t\in [T,\infty)$ whenever $m\geq m_{0},r\geq r^{*}$. Therefore, for almost all $\omega \in \Omega$,
\begin{eqnarray}
\disp\limsup_{r\to\infty} |x_{1}(t)-v(t)|\leq \frac{1}{m}
\end{eqnarray}
whenever $m\geq m_{0}$. Setting $m\rightarrow\infty$ gives
\begin{eqnarray}\label{5328}
\disp\lim_{r\to \infty} |x_{1}(t)-v(t)|=0, \; \mbox{almost surely}
\end{eqnarray}
uniformly in  $t\in [T,\infty)$ when $\sigma_{1}=0$. This completes the proof of the Theorem \ref{theorem}.

\begin{center}
APPENDIX C: Proof of Theorem \ref{weakconvergence}
\end{center}
%
By (\ref{38fd}) and performing the integration by parts, it can be easily obtained that for each $i=2,3,\cdots,n$,
\begin{equation}\label{fdfdf}
x_{i}(\varphi)=(-1)^{(i-1)}\int^{a}_{0}x_{1}(t)\varphi^{(i-1)}(t)dt,\; \forall \varphi\in C^{\infty}_{0}(0,a).
\end{equation}

From Theorem \ref{theorem}, (\ref{fdfdf}) and the definition of the  generalized derivative
in (\ref{definitiondervc}), for each $i=2,3,\cdots,n$ and
any $0<\xi<a$, it follows that
\begin{eqnarray}
&& \hspace{-0.5cm}\mathbb{E}|x_{i}(\varphi)-v^{(i-1)}(\varphi)|^{2} \cr\disp&& \hspace{-0.5cm}
=\mathbb{E}|\int^{a}_{0}x_{i}(t)\varphi(t)dt-(-1)^{(i-1)}\int^{a}_{0}v(t)\varphi^{(i-1)}(t)dt|^{2}\cr\disp&& \hspace{-0.5cm}
=\mathbb{E}|\int^{a}_{0}(x_{1}(t)-v(t))\varphi^{(i-1)}(t)dt|^{2}\cr\disp&& \hspace{-0.5cm}\leq
a\int^{a}_{0}\mathbb{E}|x_{1}(t)-v(t)|^{2}dt\sup_{t\in(0,a)}|\varphi^{(i-1)}(t)|^{2} \cr\disp &&\hspace{-0.5cm}\leq
a\int^{\xi}_{0}\mathbb{E}|x_{1}(t)-v(t)|^{2}dt\sup_{t\in(0,a)}|\varphi^{(i-1)}(t)|^{2} \cr\disp && \hspace{-0.5cm}
+a\int^{a}_{\xi}\mathbb{E}|x_{1}(t)-v(t)|^{2}dt\sup_{t\in(0,a)}|\varphi^{(i-1)}(t)|^{2} \cr\disp && \hspace{-0.5cm}
\leq \xi a\max_{0\leq t\leq \xi}\mathbb{E}|x_{1}(t)-v(t)|^{2}\sup_{t\in(0,a)}|\varphi^{(i-1)}(t)|^{2}
\cr\disp && \hspace{-0.5cm} + a(a-\xi)\sup_{t\in(0,a)}|\varphi^{(i-1)}(t)|^{2}({(1+\frac{1}{\mu})
\frac{\Gamma}{r}}\cr\disp &&\hspace{-0.5cm} +(1+\mu)\sigma^{2}_{1}\gamma_{1}).
\end{eqnarray}
 Since $\mu>0$ is arbitrary, passing to the limit as $r\rightarrow\infty$ yields
\begin{eqnarray}
&& \limsup_{r\to \infty} \mathbb{E}|x_{i}(\varphi)-v^{(i-1)}(\varphi)|^{2} \cr\disp&&
\leq \xi a\max_{0\leq t\leq \xi}\mathbb{E}|x_{1}(t)-v(t)|^{2}\sup_{t\in(0,a)}|\varphi^{(i-1)}(t)|^{2}
\cr\disp&& + a(a-\xi)\sup_{t\in(0,a)}|\varphi^{(i-1)}(t)|^{2}\sigma^{2}_{1}\gamma_{1}.
\end{eqnarray}
Setting  $\xi\rightarrow 0$, we then have
\begin{eqnarray}\label{529d}
&& \limsup_{r\to \infty} \mathbb{E}|x_{i}(\varphi)-v^{(i-1)}(\varphi)|^{2} \cr\disp&&
\leq  a^{2}\sup_{t\in(0,a)}|\varphi^{(i-1)}(t)|^{2}\sigma^{2}_{1}\gamma_{1}.
\end{eqnarray}
 When $\sigma_{1}=0$, it follows from  (\ref{529d}) that
\begin{eqnarray}
 \lim_{r\to \infty} \mathbb{E}|x_{i}(\varphi)-v^{(i-1)}(\varphi)|^{2}=0.
\end{eqnarray}
Similar to  (\ref{524dfd})-(\ref{5328}), it can be also obtained that
\begin{eqnarray}
 \lim_{r\to \infty} |x_{i}(\varphi)-v^{(i-1)}(\varphi)|=0,\;  \mbox{almost surely}.
\end{eqnarray}
This completes the proof of the Theorem \ref{weakconvergence}.

%
%

\ifCLASSOPTIONcaptionsoff
  \newpage
\fi



%

%






\end{document}